\newtheorem{thm}{Theorem}[section]
\newtheorem{lem}{Lemma}[section]
\numberwithin{equation}{section}
\def\C{\mathbb C}
\def\R{I\!\!R}
\def\C{I\!\!\!\!C}
\title{Integral Formulas and asymptotic behavior of lattice points in complex hyperbolic space}
\author{Mohamed Vall Ould Moustapha}
\begin{document}
\maketitle
\begin{abstract}
This paper deals with the $\Gamma$-lattice points problem associated to a discrete subgroup of motions $\Gamma$  in the complex hyperbolic space $\C H^n$.  We give two integral formulas for the local average of the  number $N(T, z, z')$ of
 $\Gamma$- lattice points in a sphere of radius $T$ in $\C H^n$. The first on is  in terms of the
 solution of the $\Gamma$-automorphic wave equation on $\C H^n$ and the second is given in terms of the spectral function of the Laplace-Beltrami operator under 
$\Gamma$-automorphic boundary conditions. We use the obtained integral formulas to obtain an asymptotic behavior of the number $N(T, z, z')$ as
 $T\rightarrow \infty$, with an estimate of the remainder term. Our principal tools are the explicit solution of the wave equation on the complex hyperbolic space,  
special functions and spectral theory of the Laplace-Beltrami operator under $\Gamma$  automorphic boundary conditions.
\end{abstract}
\section{Introduction}
The lattice points are the orbit of the origin under the action of the group
of integer translations on the Euclidean plane. The classical Gauss Circle Problem is to determine the best bound for the error
between the number of lattice points inside a disk and that disk’s area, otherwise
known as the lattice point discrepancy.
 Gauss had proved  that the number of lattice points in a circle of large radius $T$, was equal to the area of the disc 
with a remainder term not exceeding the circumference of the circle  (see Gauss \cite{Gauss} and Hafner \cite{Hafner}).
 Since then lattice points counting problems in Euclidean plane and more generally in Eulidean space, have been considered by many authors, with various applications in number theory
(\cite{Colin de Verdiere, Kai, Kratzel1, Kratzel2, Landau1, Landau2, Narkiewicz-Wladyslaw, Randol1, Randol2, Walfisz}).
 The Euclidean  space can be replaced by 
any Riemannian space $X$ and the group of integer translations can be replaced by
any discrete subgroup $\Gamma$ of the group
of motions $G$  of the Riemannian space $X$.\\
The case of lattice points problem in the hyperbolic setting was first studied by Jean Delsarte who obtained in (Delsarte \cite{Delsarte1} and \cite{Delsarte2})
the following remarkable formula for the number of lattice points $N(T, z, z')$ in the hyperbolic circle of radius $T$.
\begin{equation} N(T, z, z')=\pi z\label{Delsarte-Formula}\sum_{n=0}^{+\infty}F((\alpha_n, \beta_n, 2, -z/4)\varphi_n(w_0)\varphi_n(w), \end{equation}  where 
$z=2a^2(\cosh T/a-1)$ and $\alpha_n, \beta_n$ are solution of the equation\\ $x^2-x-\lambda_n a^2=0,$
 and $\varphi_n(w)$  are the eigenfunctions of the Laplace-Beltrami on a compact fundamental region of the hyperbolic
plane and $\lambda_n$ are the corresponding
eigenvalues,  and $F(a, b, c, t)$ is the classical Gauss hypergeometric function ${}_2F_1$,
 defined by (Magnus et al. \cite{Magnus et al.} p. 37)
\begin{align}\label{Gauss}
  F(a, b, c, z)=\sum_{n=0}^{\infty}\frac{(a)_n(b)_n}{(c)_n n!}z^n,
  \quad |z|<1,
\end{align}
 $(a)_n$ is the Pochhamer symbol
  $(a)_n=\frac{\Gamma(a+n)}{\Gamma(a}$
and $\Gamma$ is the classical Euler function.\\
Many other authors have studied the asymptotic behavior of the number $N(T, z, z')$ as $T 
\longrightarrow \infty$, in the cases of non Euclidean spaces 
 (\cite{Alsina-Chatzakos, Arkhipov-Chubatikov, Berard, Biro,  Bruce et al., Elstrodt et al., Hill-Parnovski,  Huber1, Huber2, Kelly, Parkkonen, Patterson,  Petridis-Risage, Phillips-Rudnick,  Selberg, Soundararajan, Wolfe}).\\
For lattice points in non compact type symmetric spaces of rank one (see\cite{B-G-M, B-M-W, Gunther, Miatello-Wallach}). 
Lax and Phillips \cite{Lax-Phillips} gave a formula expressing the 
average of number of lattice points, for the real hyperbolic case, in terms of the solution of real 
hyperbolic wave equation for a special initial function.
Levitan \cite{Levitan} derived a formula which gives an expression for the average of number of lattice 
points in real hyperbolic space, in terms of the spectral function of the shifted 
Laplace-Beltrami operator on a fundamental region $F=\R H^n/\Gamma$. He 
estimated the remainder term on the basis of an estimate for the spectral function of an 
elliptic operator for large frequencies.\\
Let $\Gamma$\ be a discrete subgroup of motions of the complex hyperbolic space
$\C H^n$, for arbitrary points $z$ and $z'$ in $\C H^n$,
we count the number of $\Gamma$-lattice points in the complex hyperbolic space
$\C H^n$ 
\begin{equation} N(T, z, z')=\#\{\gamma \in \Gamma, d(z,\gamma z')<T\},\end{equation} 
and we give the analogous of these results in the n-complex hyperbolic space.\\  
That is we investigate the local average
of the number of lattice points in complex hyperbolic ball of radius $T$,  we vary the center of the complex hyperbolic ball locally,
 and we study the function
\begin{equation}
 I(T, z, z', \alpha)=\int_F N(T, x , z') h(x) d\mu(x),
\end{equation} 
where $F=\C H^n/ \Gamma$ is the fundamental region of $\Gamma$,
and $h(x)$ is  a smooth compactly supported function satisfying 
the conditions:\\
${\bf 1)} h(x) > 0, {\bf 2)} \int h(x)d\mu(x) = 1, {\bf 3)} h(x)=0 $ for $d(x, z)\geq \alpha$ and \\  ${\bf 4)}  h(x)=O(\alpha^{-2n})$.
(For the construction of the function $h$ see  the Appendix).\\ 
 Let $u(t, z)$ be the 
solution of the Cauchy problem for the wave equation in the complex hyperbolic space $\C H^n$
 \begin{equation} \label{Cauchy-Problem}\qquad \left \{\begin{array}{cc}\partial_t^2 u(t, z)=L_nu(t, z)&(t, z)\in R\times \C H^n\\ 
u(0, z)=0 & \partial_t u(t, z)=f(z)\in C_0^\infty(\C H^n) \end{array}
\right..\end{equation}
The main results of this paper are the following theorems.
\begin{thm}\label{thm1} For sufficiently small $\alpha$,
the following formula hold\\
i)
\begin{align}I(T, z, z',\alpha)=\sum_{\gamma\in \Gamma}\int_{d(\gamma x, z')<T}h(x)d\mu (x),\end{align}
ii)  \begin{align}N(T-\alpha, z, z')\leq I(T, z, z', \alpha)\leq 
N(T+\alpha, z, z').\end{align}
iii) Let $u(t, z)$ be the solution of the Cauchy problem \eqref{Cauchy-Problem}, with initial data 
\begin{align} f(x)=\sum_{\gamma\in \Gamma}h(\gamma^{-1}x),\end{align}
 then we have \\
 $I(T, z, z', \alpha)=c_n\cosh^{1/2}T\int^T_0(\cosh T-\cosh t)^{n-3/2}\times $ \begin{equation}\label{I} F(-1/2, 3/2, n-1/2
,{\cosh T-\cosh t\over 2\cosh T})  \sinh t u (t, z') d t, \end{equation}
where 
 $c_n=(-1)^{n-1}\pi^{n-1/2}2^{n+1/2}/ \Gamma (n-1/2)$, $F(a, b, c, t)$ is the classical hypergeometric function ${}_2F_1$ given in \eqref{Gauss}.\\
\end{thm}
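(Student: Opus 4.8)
The plan is to dispatch (i) and (ii) by elementary unfolding arguments and to reduce (iii) to a one–dimensional inversion built on the explicit wave kernel of $\C H^n$, which is where the real work lies. For (i), I would write the counting function as a sum of indicators, $N(T,x,z')=\sum_{\gamma\in\Gamma}\mathbf 1_{\{d(x,\gamma z')<T\}}$, insert this into the definition of $I(T,z,z',\alpha)$, and interchange sum and integral; this is legitimate because $h\ge 0$ is compactly supported and the $\Gamma$-orbit of any point is locally finite, so only finitely many terms are nonzero on $\mathrm{supp}(h)$. Using that each $\gamma$ is an isometry, $d(x,\gamma z')=d(\gamma^{-1}x,z')$, and relabelling $\gamma\mapsto\gamma^{-1}$ (a bijection of $\Gamma$) turns the sum into $\sum_{\gamma}\int_{\{d(\gamma x,z')<T\}}h\,d\mu$, which is exactly (i).

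For (ii) I would read the intermediate form $I=\sum_{\gamma}\int_{\C H^n}\mathbf 1_{\{d(x,\gamma z')<T\}}h\,d\mu$ obtained along the way in (i): each summand is the $h$-mass captured by the ball $B(\gamma z',T)$. Since $\mathrm{supp}(h)\subset B(z,\alpha)$ and $\int h\,d\mu=1$, the triangle inequality gives two bounds. If $d(z,\gamma z')<T-\alpha$, then every $x\in B(z,\alpha)$ satisfies $d(x,\gamma z')\le d(x,z)+d(z,\gamma z')<T$, so the summand equals $\int h\,d\mu=1$; conversely, a nonzero summand forces some $x\in B(z,\alpha)$ with $d(x,\gamma z')<T$, whence $d(z,\gamma z')<T+\alpha$, and the summand is at most $1$. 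Summing over $\gamma$ sandwiches $I$ between $\#\{\gamma:d(z,\gamma z')<T-\alpha\}$ and $\#\{\gamma:d(z,\gamma z')<T+\alpha\}$, i.e. between $N(T-\alpha,z,z')$ and $N(T+\alpha,z,z')$.

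For (iii) the first step is a periodization identity. Starting from $I=\sum_{\gamma}\int_{\C H^n}\mathbf 1_{\{d(x,\gamma z')<T\}}h(x)\,d\mu(x)$, writing $d(x,\gamma z')=d(\gamma^{-1}x,z')$, substituting $y=\gamma^{-1}x$, and summing gives $I(T,z,z',\alpha)=\int_{\{d(y,z')<T\}}f(y)\,d\mu(y)$ with $f=\sum_{\gamma}h(\gamma^{-1}\cdot)$; that is, $I$ is the integral of the $\Gamma$-periodization $f$ over the geodesic ball of radius $T$ centred at $z'$. Passing to geodesic polar coordinates about $z'$ and writing $A(r)$ for the area of the sphere of radius $r$ in $\C H^n$ (proportional to $\sinh^{2n-1}r\,\cosh r$), this becomes $I=\int_0^T g(r)\,dr$, where $g(r)=A(r)\,(M_r f)(z')$ and $M_r f$ is the spherical mean of $f$ at $z'$.

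The second step brings in the explicit solution of the Cauchy problem \eqref{Cauchy-Problem}. Because the wave operator on $\C H^n$ has finite propagation speed and the symmetric-space convolution structure radialises $f$ about $z'$, the solution is a Volterra transform of the same density, $u(t,z')=\int_0^t E(t,r)\,g(r)\,dr$, where $E(t,r)$ is the radial profile of the propagator, supported in $r\le t$. The task then reduces to recovering $\int_0^T g\,dr$ from the values $u(t,z')$, $0\le t\le T$, i.e. to producing a kernel $\Phi(T,t)$ with $\int_r^T\Phi(T,t)E(t,r)\,dt\equiv 1$ on $[0,T]$. Substituting $x=\cosh t$, $X=\cosh T$ turns the $\C H^n$ propagator into a Riemann--Liouville kernel of order $n-\tfrac12$ times a hypergeometric correction, and the claimed kernel $c_n\cosh^{1/2}T\,(\cosh T-\cosh t)^{n-3/2}F(-\tfrac12,\tfrac32,n-\tfrac12,\tfrac{\cosh T-\cosh t}{2\cosh T})\sinh t$ is precisely the solution of this inversion; checking it amounts to a contiguous/quadratic hypergeometric identity together with the Beta-integral evaluation that fixes the normalisation $c_n=(-1)^{n-1}\pi^{n-1/2}2^{n+1/2}/\Gamma(n-1/2)$, the $\Gamma(n-1/2)$ being the Riemann--Liouville normalising factor of order $n-\tfrac12$. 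I expect this last, special-function step to be the main obstacle: one needs the wave kernel $E(t,r)$ on $\C H^n$ in fully explicit form and then must carry out the inversion cleanly, since the parameters $(-\tfrac12,\tfrac32,n-\tfrac12)$ and the argument $\tfrac{\cosh T-\cosh t}{2\cosh T}$ are exactly what collapse the composition of propagator and inverse transform to the identity, whereas the geometric reductions of the first step are routine once the periodization identity $I=\int_{\{d(\cdot,z')<T\}}f\,d\mu$ is in hand.
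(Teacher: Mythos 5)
Your proposal is correct and follows essentially the same route as the paper: (i) and (ii) are the paper's unfolding and sandwich arguments (its Lemma \ref{lem1}), and for (iii) the paper likewise combines the periodization identity $I(T,z,z',\alpha)=\int_{d(x,z')<T}f(x)\,d\mu(x)$ with the explicit propagator \eqref{Solution-Wave} of Intissar--Ould Moustapha and then verifies precisely the kernel-inversion identity you describe. The special-function step you flag as the main obstacle is exactly the content of the paper's Lemmas \ref{lem2} and \ref{lem3}: after $(n-1)$ integrations by parts, the derivative $\left({\partial\over \sinh t\,\partial t}\right)^{n-1}$ applied to the hypergeometric kernel collapses, via the differentiation formula \eqref{derivative}, the quadratic transformation $F(a,1-a,c,y)=(1-y)^{c-1}F((c-a)/2,(c+a-1)/2,c,4y-4y^2)$ and $F(a,b,b,z)=(1-z)^{-a}$, to $c_n^1\cosh^{-1/2}T\cosh t\,(\cosh^2T-\cosh^2t)^{-1/2}$, after which the inner $t$-integral evaluates to the constant $j(T,r)=\pi/2$, confirming the Beta-integral normalisation you anticipated.
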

\begin{thm}\label{thm2} If $\Gamma$\ is a discrete subgroup of motions of the complex hyperbolic space 
$\C H^n$, $n\geq 1$, which we assume to be torsion free, 
then we have, for $\alpha$ sufficiently small,
\begin{align} I(T, z, z', \alpha)={\pi^n\over\Gamma(n+1)}\sinh^{2n}T\times\nonumber \\ \int_{-n^2}^{+\infty}F((n-
i\sqrt\lambda)/2, (n+i\sqrt\lambda)/2, n+1, -\sinh^2T) \nonumber\\
d_\lambda\int_{F} \theta_\Gamma(x, z', \lambda)h(x)d\mu(x),\end{align}
where $\theta_\Gamma(z, z', \lambda)$ is the spectral function of the Laplace-Beltrami operator 
on $F$, $F(a, b, c, t)$ is the classical hypergeometric function ${}_2F_1$ given in \eqref{Gauss}
\end{thm}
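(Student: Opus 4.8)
The plan is to derive Theorem \ref{thm2} from the wave-equation formula of Theorem \ref{thm1}(iii) by inserting the spectral resolution of the wave solution $u(t,z')$ and then reducing everything to a single special-function identity. Write the right-hand side of \eqref{I} as the result of applying to $u(\cdot,z')$ the explicit integral operator
\begin{equation*}
\mathcal{K}_T[g]=c_n\cosh^{1/2}T\int_0^T(\cosh T-\cosh t)^{n-3/2}F\Bigl(-\tfrac12,\tfrac32,n-\tfrac12,\tfrac{\cosh T-\cosh t}{2\cosh T}\Bigr)\sinh t\,g(t)\,dt,
\end{equation*}
so that $I(T,z,z',\alpha)=\mathcal{K}_T[u(\cdot,z')]$. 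Since $\Gamma$ is torsion free and $h\in C_0^\infty$, the periodization $f=\sum_{\gamma\in\Gamma}h(\gamma^{-1}\cdot)$ is a smooth $\Gamma$-periodic function, compactly supported modulo $\Gamma$, hence it lies in $L^2(F)$ and in the domain of every power of $L_n$.

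First I would write the spectral resolution of the wave propagator. The operator $-L_n$ is self-adjoint on $L^2(F)$; let $\{E(\lambda)\}_{\lambda\ge -n^2}$ be its spectral family, reparametrized so that its Schwartz kernel is the spectral function $\theta_\Gamma(\cdot,\cdot,\lambda)$, and let $\mu(\lambda)$ be the corresponding eigenvalue of $-L_n$, an increasing function of $\lambda$ vanishing at $\lambda=-n^2$ (proportional to $n^2+\lambda$ in the standard normalization, which is exactly what makes $-n^2$ the lower endpoint of the integral). The solution of \eqref{Cauchy-Problem} with $u(0)=0$, $\partial_t u(0)=f$ is then
\begin{equation*}
u(t,z')=\int_{-n^2}^{+\infty}\frac{\sin\bigl(t\sqrt{\mu(\lambda)}\bigr)}{\sqrt{\mu(\lambda)}}\,d_\lambda\bigl(E(\lambda)f\bigr)(z'),\qquad \bigl(E(\lambda)f\bigr)(z')=\int_F\theta_\Gamma(z',x,\lambda)f(x)\,d\mu(x).
\end{equation*}
Smoothness of $f$ forces rapid decay of the spectral measure, so substituting this into $\mathcal{K}_T[u(\cdot,z')]$ and interchanging the (compact) $t$-integral with the spectral integral is justified by Fubini, giving
\begin{equation*}
I(T,z,z',\alpha)=\int_{-n^2}^{+\infty}\mathcal{K}_T\Bigl[\tfrac{\sin(\cdot\,\sqrt{\mu(\lambda)})}{\sqrt{\mu(\lambda)}}\Bigr]\,d_\lambda\bigl(E(\lambda)f\bigr)(z').
\end{equation*}

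The heart of the argument, and the step I expect to be hardest, is the evaluation of the inner transform
\begin{equation*}
\mathcal{K}_T\Bigl[\tfrac{\sin(\cdot\,\sqrt{\mu(\lambda)})}{\sqrt{\mu(\lambda)}}\Bigr]=\frac{\pi^n}{\Gamma(n+1)}\sinh^{2n}T\;F\Bigl(\tfrac{n-i\sqrt\lambda}{2},\tfrac{n+i\sqrt\lambda}{2},n+1,-\sinh^2 T\Bigr).
\end{equation*}
Conceptually this says that applying the wave-to-average operator $\mathcal{K}_T$ to the elementary one-dimensional wave $\sin(t\sqrt\mu)/\sqrt\mu$ reproduces the integral over the geodesic ball $B_T$ of the spherical function of $\C H^n$, whose spherical transform is precisely this ${}_2F_1$ of $-\sinh^2T$ with parameters $\tfrac{n\pm i\sqrt\lambda}{2}$, parameter $n+1$ (raised by one through the radial integration), and volume factor $\tfrac{\pi^n}{\Gamma(n+1)}\sinh^{2n}T$. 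I would establish it as a definite-integral identity: substitute $w=(\cosh T-\cosh t)/(2\cosh T)$, expand both hypergeometric factors by \eqref{Gauss}, evaluate the resulting $t$-integrals of $\sin(t\sqrt\mu)$ against powers of $\cosh T-\cosh t$ term by term, and recognize that after simplification the double series collapses, via a quadratic transformation of the hypergeometric function, to the single ${}_2F_1$ on the right; alternatively one checks that both sides satisfy the same radial differential equation in $T$ with matching initial data at $T=0$, which determines them uniquely. Justifying the term-by-term interchange and pinning the normalizing constant $c_n$ against $\pi^n/\Gamma(n+1)$ is the delicate part.

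Finally I would substitute this evaluation back and unfold. Using the $\Gamma$-invariance and the symmetry of the spectral kernel together with the change of variables $x\mapsto\gamma x$, one obtains $\bigl(E(\lambda)f\bigr)(z')=\int_{\C H^n}\theta_\Gamma(z',x,\lambda)h(x)\,d\mu(x)$; for $\alpha$ sufficiently small the support of $h$ lies in a single copy of $F$, so this equals $\int_F\theta_\Gamma(x,z',\lambda)h(x)\,d\mu(x)$. Inserting this and the evaluated transform into the last display yields exactly the formula of Theorem \ref{thm2}.
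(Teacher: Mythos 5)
Your overall route is the same as the paper's: substitute the spectral representation $u(t,z')=\int_{-n^2}^{+\infty}\frac{\sin (\sqrt\lambda\, t)}{\sqrt\lambda}\,d_\lambda\int_F h(x)\theta_\Gamma(x,z',\lambda)\,d\mu(x)$ of the automorphic wave solution into the formula of Theorem \ref{thm1}(iii), interchange the $t$-integration with the spectral integration, evaluate the resulting inner $t$-transform in closed form, and unfold; your remarks that automorphic data gives an automorphic solution and that the spectral kernel unfolds onto $F$ also agree with the paper.

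The genuine gap is at the step you yourself flag as the hardest: the closed-form evaluation of $\mathcal{K}_T\bigl[\sin(t\sqrt\lambda)/\sqrt\lambda\bigr]$. You state the correct identity but offer only two sketched strategies, neither of which is carried out and neither of which is likely to succeed as described. In the series strategy, after expanding the hypergeometric kernel the individual integrals $\int_0^T(\cosh T-\cosh t)^{n-3/2+k}\sinh t\,\sin(t\sqrt\lambda)\,dt$ admit no usable closed form, so there is nothing concrete to ``collapse via a quadratic transformation''; in the ODE strategy, proving that the $T$-dependent integral operator $\mathcal{K}_T$ intertwines $\partial_t^2$ with the radial Jacobi operator in $T$ is itself the whole substance of the identity, not a shortcut around it. The paper fills precisely this hole with two concrete ingredients: (i) a single integration by parts in $t$, via the derivative formula \eqref{derivative}, which converts the kernel $(\cosh T-\cosh t)^{n-3/2}F(-\tfrac{1}{2},\tfrac{3}{2},n-\tfrac{1}{2},\cdot)\sinh t\,\frac{\sin(\sqrt\lambda\, t)}{\sqrt\lambda}$ into $(\cosh T-\cosh t)^{n-1/2}F(-\tfrac{1}{2},\tfrac{3}{2},n+\tfrac{1}{2},\cdot)\cos(\sqrt\lambda\, t)$, giving \eqref{III}; and (ii) Lemma \ref{lem4}, which identifies the resulting $t$-integral as $\sinh^{2n}T\,F\bigl(\tfrac{n-i\sqrt\lambda}{2},\tfrac{n+i\sqrt\lambda}{2},n+1,-\sinh^2T\bigr)$ --- this is exactly Koornwinder's generalized Mehler--Fock formula for Jacobi functions, specialized to $(\alpha,\beta)=(n,-1)$. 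Recognizing (or citing) that transmutation formula is the missing idea; with it, your argument becomes the paper's proof, and without it the crux of the theorem remains unproven.
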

\begin{thm}\label{thm3} The hypotheses are the same as in Theorem \ref{thm1}, furthermore 
assume that
$\Gamma $ is cocompact or of finite covolume, then we have 
\begin{align}N(T, z, z')=A(T, z, z')+\left\{
\begin{array}{ll}
O(e^{(2n-1-{2n-2\over 2n+1})T})\\
 O(e^{(2n-2-{2n-4\over 2n+1})T})
\end{array}
\right.\end{align}
where
\begin{align}\label{Finite-part} A(T, z, z')=\left(\frac{\pi}{2}\right)^n\sum_{j=1}^N{2^{-\mu_j}\Gamma(\mu_j)e^{(n+\mu_j)T}\over 
\Gamma((n+\mu_j)/2)\Gamma((1+(n+\mu_j)/2)} \varphi_j(z) \varphi_j(z')\end{align} 
where $\mu_j=\sqrt{|\lambda_j|}$\ , and $ 
-n^2\leq\lambda_1\leq\lambda_2\leq\ldots\leq\lambda_N<0$, are the eigenvalues of the 
shifted Laplace-Beltrami operator $L_\Gamma$, on the interval $(-n^2, 0)$, and $ 
\varphi_j(z), 0\leq j\leq N $\ are the corresponding normalized eigenfunctions.\\
Note that the summation in \eqref{Finite-part} is meaningful as long as $$n+\mu_j > 
\left\{\begin{array}{ll}2n-1-{2n-2\over 2n+1}&{n\geq 2}\\ 2n-2-{2n-4\over 2n+1}& {n > 2} 
\end{array}
\right..$$
and consequently
$$-n^2\leq \lambda_j < -(n-1)^2({2n-1\over 2n+1})^2, \qquad n\geq 2$$ $$-n^2\leq 
\lambda_j < -(n-2)^2({2n-1\over 2n+1})^2, \qquad n > 2.$$ 
\end{thm}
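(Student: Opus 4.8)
\noindent The plan is to start from the spectral representation of the local average in Theorem \ref{thm2} and to convert it into a bound for the count $N(T,z,z')$ by means of the two-sided inequality in Theorem \ref{thm1} ii). First I would split the spectral measure $d_\lambda\theta_\Gamma(x,z',\lambda)$ into its atomic part, carried by the finitely many eigenvalues $-n^2\le\lambda_1\le\dots\le\lambda_N<0$ of $L_\Gamma$ lying below the bottom $\lambda=0$ of the continuous spectrum, and a remaining part supported on $[0,+\infty)$ (the non-exceptional eigenvalues together with the continuous spectrum, the latter present in the finite-covolume case while absent in the cocompact one). The jump of $\theta_\Gamma$ at $\lambda_j$ equals $\varphi_j(x)\varphi_j(z')$, so after integrating against $h$ and using $\int h\,d\mu=1$ it contributes $\varphi_j(z)\varphi_j(z')$ up to an $O(\alpha)$ localization error; hence the atomic part produces, to leading order, the finite sum $A(T,z,z')$, while the part on $[0,+\infty)$ is to be absorbed into the remainder.

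For the main term I would substitute, for each $\lambda_j<0$, $\sqrt{\lambda_j}=i\mu_j$ with $\mu_j=\sqrt{|\lambda_j|}$, so that the hypergeometric parameters become $(n\pm\mu_j)/2$, and then apply the linear transformation of ${}_2F_1$ expanding $F(a,b,c,z)$ in powers of $1/z$ as $z=-\sinh^2T\to-\infty$. Since $\mu_j>0$, the dominant term is the one with exponent $(-z)^{-(n-\mu_j)/2}\sim(\sinh T)^{-(n-\mu_j)}$, and multiplication by the prefactor $\tfrac{\pi^n}{\Gamma(n+1)}\sinh^{2n}T$ gives the growth $(\sinh T)^{n+\mu_j}\sim 2^{-(n+\mu_j)}e^{(n+\mu_j)T}$. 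Collecting the Gamma factors of the connection coefficient, namely $\Gamma(\mu_j)/[\Gamma((n+\mu_j)/2)\Gamma((n+2+\mu_j)/2)]$, and using $1+(n+\mu_j)/2=(n+2+\mu_j)/2$ should reproduce exactly the constant displayed in \eqref{Finite-part}; this identification is routine but bookkeeping-heavy.

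The heart of the matter is the estimate of the remainder coming from the spectrum on $[0,+\infty)$, for which I would follow the Levitan strategy: combine the Weyl asymptotics with remainder for the spectral function $\theta_\Gamma$ on the $2n$-dimensional manifold $F$ with uniform bounds for the kernel $F((n-i\sqrt\lambda)/2,(n+i\sqrt\lambda)/2,n+1,-\sinh^2T)$ in both $\lambda\ge0$ and $T\to\infty$. For real $\sqrt\lambda$ the two pieces of the connection formula are complex conjugates of equal modulus, so the kernel times $\sinh^{2n}T$ is $O(e^{nT})$ and carries the oscillatory factor $(\sinh T)^{\pm i\sqrt\lambda}$; integrating this against the smoothed density $d_\lambda\int_F\theta_\Gamma h\,d\mu$, whose high-frequency content is damped beyond $\sqrt\lambda\sim\alpha^{-1}$ because $h$ is concentrated on scale $\alpha$, yields a remainder of the shape $e^{\gamma T}\alpha^{-\beta}$. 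The main obstacle, and the step that forces the precise exponents in the statement, is to make this last estimate sharp: one must control by Stirling the ratio $\Gamma(i\sqrt\lambda)/[\Gamma((n+i\sqrt\lambda)/2)\Gamma((n+2+i\sqrt\lambda)/2)]$, exploit the oscillation in $\sqrt\lambda$ jointly with the Weyl remainder, and handle the singular behaviour of the kernel near the bottom $\lambda=0$ where the two hypergeometric parameters coalesce.

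Finally I would combine the two errors. By Theorem \ref{thm1} ii) one has $I(T-\alpha,z,z',\alpha)\le N(T,z,z')\le I(T+\alpha,z,z',\alpha)$, so replacing $I$ by $A$ costs, in addition to the spectral remainder $e^{\gamma T}\alpha^{-\beta}$, a smoothing error of size $A(T+\alpha,z,z')-A(T-\alpha,z,z')\sim\alpha\,e^{2nT}$ governed by the leading eigenvalue $\mu_1=n$. Choosing $\alpha$ to balance these two contributions produces the stated exponent $2n-1-\tfrac{2n-2}{2n+1}$ for $n\ge2$. The second, sharper bound $2n-2-\tfrac{2n-4}{2n+1}$ valid for $n>2$ should arise by retaining also the first subdominant term of the hypergeometric expansion (equivalently, by using the finer form of the Weyl remainder), and the self-consistency of the whole scheme is precisely the threshold $n+\mu_j>2n-1-\tfrac{2n-2}{2n+1}$ recorded after \eqref{Finite-part}, which decides when a term $e^{(n+\mu_j)T}$ genuinely dominates the error.
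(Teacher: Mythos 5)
Your skeleton coincides with the paper's: sandwich $N$ between $I(T\mp\alpha)$ via Theorem \ref{thm1} ii), split the spectral representation of Theorem \ref{thm2} into the atomic part over the finitely many negative eigenvalues and the part over $[0,+\infty)$, extract the main term $A(T,z,z')$ from the $z\mapsto 1/z$ connection formula for ${}_2F_1$ (your bookkeeping of the Gamma factors and of $(\sinh T)^{n+\mu_j}\sim 2^{-(n+\mu_j)}e^{(n+\mu_j)T}$ is exactly what the paper does), and finally balance errors by choosing $\alpha=e^{-\epsilon T}$. However, the heart of the proof --- the estimate of the contribution $I_2$ of the spectrum on $[0,+\infty)$ --- is precisely the step you leave as ``the main obstacle,'' and what you sketch for it would not work as stated. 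You propose to control $I_2$ by the oscillation of the kernel together with a claim that the spectral density $d_\lambda\int_F\theta_\Gamma(x,z',\lambda)h(x)d\mu(x)$ is ``damped beyond $\sqrt\lambda\sim\alpha^{-1}$ because $h$ is concentrated on scale $\alpha$.'' The function $h$ is only assumed to satisfy $h=O(\alpha^{-2n})$ (conditions 1)--4) of the paper); no bounds on its derivatives are assumed, so no frequency-damping of this kind is available, and in any case your scheme has a single parameter $\alpha$, which is not enough to produce the stated exponents.

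The paper's actual argument supplies two ingredients you are missing. First, a kernel-decay lemma: by the Mehler--Fock type representation of Lemma \ref{lem4}, $H_n(\lambda,T)=\int_0^T(\cosh T-\cosh t)^{n-1/2}F(-\tfrac12,\tfrac32,n+\tfrac12,\cdot)\cos\sqrt\lambda\,t\,dt$, and Levitan's oscillatory-integral estimate then gives Lemma \ref{lem5}: $H_n(\lambda,T)=O(\lambda^{-(2n+1)/4}e^{nT})$; the decay in $\lambda$ comes from the kernel, not from $h$. Second, a two-regime splitting of the $\lambda$-integral at a \emph{second} free parameter $e^{\delta T}$: on $[1,e^{\delta T}]$ one uses the positivity inequality $|\Delta\theta(x,z',\lambda)|\le\tfrac12[\Delta\theta(x,x,\lambda)+\Delta\theta(z',z',\lambda)]$ together with H\"ormander's bound $\theta(x,x,\lambda)=O(\lambda^n)$, giving $J_1=O(e^{\frac{2n-1}{4}\delta T})$; on $[e^{\delta T},\infty)$ one uses Cauchy--Schwarz twice, Parseval's identity, and $\int_F h^2\,d\mu=O(\alpha^{-2n})$ (which follows from $\|h\|_\infty=O(\alpha^{-2n})$ and $\int h=1$), giving $J_2=O(e^{-\delta T/4}\alpha^{-n})$. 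The stated exponents come from the \emph{joint} optimization in $(\epsilon,\delta)$, yielding $\delta=2\epsilon$ and $\epsilon=\frac{2(n-1)}{2n+1}$ (resp.\ $\frac{2(n-2)}{2n+1}$); without the decay rate $\lambda^{-(2n+1)/4}$ and the cutoff $e^{\delta T}$ this balance cannot even be set up. A further minor discrepancy: the paper's second bound (the case $n>2$) arises from a different balance of the smoothing error ($\alpha e^{(2n-2)T}$ in place of $\alpha e^{(2n-1)T}$) coming from the localization of $h$ against the eigenfunctions, not, as you suggest, from retaining a subdominant term of the hypergeometric expansion.
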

In this paper we take as a model of the complex hyperbolic space the ball
$\C H^n=\{z=(z_1, z_2, ..., z_n)\in \C^n, |z| < 1 \}$ where $\C$ is the the complex field and 
$|z|^2 = z{\overline z} $, equiped with the metric $ds$ given by.
$$ ds^2=(1-
|z|^2)^{-2}\sum_{i, j=1}^n[(1-
|z|^2)\delta_{i j}+z_i\overline{z_j}]d\overline{z_i}\otimes d z_j.$$
Recall that $(\C H^n, ds)$ is a complete 
Riemannian manifold with negative sectional curvature, in which the group of motions is 
$SU(n,1)$.
 In this case, the volume 
element is given by $$ d\mu(z)=\frac{dz}{(1-|z|^2)^{n+1}},
$$
 where $dz$ is the 
Lebesgue volume element on $C^n$.\\  The $SU(n,1)-$invarian Laplace-Beltrami operator associated to the metric $ds$ is of  
the form \begin{equation}
L_n= 4(1-|z|^2)\sum_{i, j=1}^n\left(\delta_{i j}-|z|^2\right){\partial^2\over \partial z_i \partial\overline{ z_j}}+n^2.
\end{equation} 
It is known that the operator $- L_n$ is elliptic selfadjoint non-negative and has a 
continuous spectrum represented by the positive real axis.\\ 
The remanding of the paper is organized as follows, Section 2 is devoted to the integral formulas for the number of lattice points  $N(T, z, z')$ .
That is we prove the Theorems  \ref{thm1} and  \ref{thm2}.  Section 3 is devoted to  the proof of the Theorem \ref{thm3}, which gives an asymptotic formula for the number of lattice point $N(T, z, z')$ 
 with an estimate of the remainder term.
\section{Integral formulas for the number of lattice points  $N(T, z, z')$}
We prove the following lemmas
\begin{lem}\label{lem1} Let $\Gamma$ be a discrete subgroup of motions of the complex hyperbolic space, for  $h(x)$ is a smooth function satisfying 
the above conditions 1), 2) and 3), then
\begin{align}\int_{d(\gamma x, z')<T}h(x)d\mu (x)=\left\{
\begin{array}{ll}
1, \mbox{\rm if}\ \  d(z',\gamma z)\leq T-
\alpha\\
 0, \mbox{\rm if} \ \  d(z',\gamma z)\geq T+\alpha
\end{array}
\right.,\end{align}
and lies between $0$ and 
$1$ otherwise.\\
\end{lem}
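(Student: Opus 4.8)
The plan is to exploit two structural facts and then apply the triangle inequality in both directions. By condition 3), the function $h$ is supported in the geodesic ball $B(z,\alpha)=\{x\in\C H^n:\ d(x,z)<\alpha\}$, so the integral over $\{x:\ d(\gamma x,z')<T\}$ only sees the part of this region meeting the support of $h$. The second fact I would use is that every $\gamma\in\Gamma$ acts as an isometry of $\C H^n$, whence $d(\gamma x,\gamma z)=d(x,z)$ for all $x$. Comparing the three points $\gamma x$, $\gamma z$ and $z'$ via the triangle inequality is then what controls whether the support of $h$ falls inside, outside, or across the boundary of the region of integration.

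First I would treat the case $d(z',\gamma z)\le T-\alpha$. For $x$ in the support of $h$ one has $d(x,z)<\alpha$, so using the isometry property and the ordinary triangle inequality,
\begin{equation*}
d(\gamma x,z')\le d(\gamma x,\gamma z)+d(\gamma z,z')=d(x,z)+d(z',\gamma z)<\alpha+(T-\alpha)=T.
\end{equation*}
Thus the whole support of $h$ lies inside $\{x:\ d(\gamma x,z')<T\}$, and by condition 2) the integral equals $\int_{\C H^n}h(x)\,d\mu(x)=1$. For the opposite case $d(z',\gamma z)\ge T+\alpha$ I would use the reverse triangle inequality: for $x$ in the support of $h$,
\begin{equation*}
d(\gamma x,z')\ge d(\gamma z,z')-d(\gamma x,\gamma z)=d(z',\gamma z)-d(x,z)>(T+\alpha)-\alpha=T,
\end{equation*}
so no point of the support meets the region of integration, the integrand vanishes there, and the integral is $0$.

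Finally, in the remaining range $T-\alpha<d(z',\gamma z)<T+\alpha$, the region of integration generally captures only part of the support of $h$. Since $h>0$ by condition 1) and $\int h\,d\mu=1$, the integral over any measurable subset of $\C H^n$ satisfies $0\le\int h\,d\mu\le 1$, which yields the intermediate assertion. I do not anticipate any genuine analytic obstacle: the entire argument is the careful bookkeeping of the forward and reverse triangle inequalities together with the invariance $d(\gamma x,\gamma z)=d(x,z)$, and the main point to state explicitly is that $\gamma$ being a motion of $\C H^n$ is precisely what lets the support condition on $h$ near $z$ be transported to a condition on $\gamma z$ relative to $z'$.
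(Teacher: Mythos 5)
Your proposal is correct and follows essentially the same argument as the paper: the forward triangle inequality combined with the isometry invariance $d(\gamma x,\gamma z)=d(x,z)$ for the case $d(z',\gamma z)\leq T-\alpha$, the reverse triangle inequality for the case $d(z',\gamma z)\geq T+\alpha$, and positivity plus normalization of $h$ for the intermediate range. The only cosmetic difference is that in the second case the paper shows points of the integration region lie outside the support of $h$, while you show points of the support lie outside the integration region, which is the same argument in contrapositive form.
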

\begin{proof}
 If $ d(z', \gamma z)\leq T-\alpha$\ and $d(x, z)\leq \alpha$, then
$$d(\gamma x, z')\leq d(\gamma x,\gamma z)+d(\gamma z, z')\leq \alpha + T-\alpha =T,$$ 
this shows that the support of $h$ is contained in the ball  
$\{z',  d(\gamma x, z')<T\}$.\\
Now if $ d(z', \gamma z)\geq T+\alpha $ then $$d(x, z)= d(\gamma x, \gamma z)\geq 
d(z',\gamma z)-d(\gamma x, z')\geq T+\alpha - T=\alpha .$$ \smallskip\
and therefore the above  integral is equal to zero because $d(x, z)> \alpha$ and the proof of the lemma is finished. 
\end{proof}
\begin{lem} \label{lem2}  Set  $ K(T, t)=\left({\partial\over\sinh t\partial t}\right)^{n-1} (\cosh T-\cosh t)^{n-3/2}\times$ \begin{align} \label{KT1}F(-1/2, 3/2, n-1/2, (\cosh T-\cosh t)/2\cosh T),
\end{align} 
then the following formula holds
\begin{equation}\label{KT2} K(T, t)=c_n^1\cosh^{-1/2}T\cosh t  (\cosh^2 T-\cosh^2 t)^{-1/2}\end{equation} 
with $c_n^1 =(-1)^{n-1}\Gamma(n-1/2)\frac{\sqrt{2}}{\sqrt{\pi}}$.
\end{lem}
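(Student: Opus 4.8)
My understanding of this lemma: we need to evaluate the differential operator $\left(\frac{\partial}{\sinh t\,\partial t}\right)^{n-1}$ applied to the product of $(\cosh T-\cosh t)^{n-3/2}$ with a Gauss hypergeometric function, and show that the result collapses to the elementary expression in \eqref{KT2}. This is a computation-heavy identity rather than a deep structural result, so the natural strategy is to reduce the combination of the power factor and the ${}_2F_1$ into a recognizable single special function whose repeated differentiation is known.

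The plan is to first simplify the argument of the hypergeometric function. Writing $s=(\cosh T-\cosh t)/2\cosh T$, the factor $\cosh^{n-3/2}T$ can be pulled out so that $(\cosh T-\cosh t)^{n-3/2}=(2\cosh T)^{n-3/2}s^{n-3/2}$, and the whole bracketed expression in \eqref{KT1} becomes (up to the constant $(2\cosh T)^{n-3/2}$) the function $s^{n-3/2}\,F(-1/2,3/2,n-1/2,s)$. First I would identify this combination as a hypergeometric expression that admits a closed form. Using one of the standard quadratic or Euler-type transformations for ${}_2F_1$ — in particular the identity that reduces $F(-1/2,3/2,n-1/2,s)$ times a power of $s$ to an algebraic function times $(1-s)^{-1/2}$ — one should be able to write the bracketed quantity in \eqref{KT1} as an explicit elementary function of $\cosh T$ and $\cosh t$. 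Since $1-s=(\cosh T+\cosh t)/2\cosh T$, the appearance of $(\cosh^2 T-\cosh^2 t)^{-1/2}=[(\cosh T-\cosh t)(\cosh T+\cosh t)]^{-1/2}$ on the right-hand side of \eqref{KT2} strongly suggests that after transformation the quantity is proportional to $(\cosh T-\cosh t)^{n-1}(\cosh^2 T-\cosh^2 t)^{-1/2}$ or something that, after the derivatives are applied, yields that half-power singularity.

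Next I would apply the operator $D=\frac{1}{\sinh t}\frac{\partial}{\partial t}$ repeatedly. The key observation that makes this tractable is that $D$ acts simply on functions of $\cosh t$: if $g=g(\cosh t)$ then $Dg = -g'(\cosh t)$, so $D$ is (up to sign) ordinary differentiation with respect to the variable $u=\cosh t$. Thus the entire computation becomes $\left(-\frac{\partial}{\partial u}\right)^{n-1}$ applied to a function of $u$ alone (with $T$, equivalently $\cosh T$, held fixed). I expect the transformed bracketed expression to be a constant multiple of $(\cosh T - u)^{n-3/2}$ or a similar single power (the hypergeometric factor having been absorbed), so that differentiating $n-1$ times in $u$ produces the Pochhammer-type constant and lowers the exponent by $n-1$, landing on the power $(\cosh T-u)^{-1/2}$, which combines with the remaining factors to give \eqref{KT2}.

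The main obstacle will be the first step: correctly recognizing and verifying the closed form of $s^{n-3/2}F(-1/2,3/2,n-1/2,s)$. The parameters $(-1/2,3/2,n-1/2)$ are special — the difference $c-a-b = (n-1/2)-(-1/2)-(3/2)=n-3/2$ and the specific half-integer values suggest a contiguous-relation or a Jacobi-polynomial-type reduction, but pinning down the exact transformation and tracking all the constants (especially reconciling them with $c_n^1=(-1)^{n-1}\Gamma(n-1/2)\sqrt{2}/\sqrt{\pi}$) is where errors are most likely. Once the bracketed quantity is expressed as a pure power of $(\cosh T-\cosh t)$ times a factor depending only on $\cosh T$, the repeated differentiation is routine via the rule $\frac{d^k}{du^k}(a-u)^\beta = (-1)^k(-\beta)(-\beta+1)\cdots(-\beta+k-1)(a-u)^{\beta-k}$, and the constant should assemble into the stated $c_n^1$. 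I would verify the whole identity independently at $n=1$ (where the operator is the identity and the claim reduces to an algebraic check on the hypergeometric value) as a consistency test before trusting the general constant.
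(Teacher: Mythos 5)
Your plan has the right change of variable ($z=(\cosh T-\cosh t)/2\cosh T$, so the operator becomes a constant times $\partial/\partial z$, which is also how the paper starts), but the core of your strategy is in the wrong order, and that order matters. You propose to \emph{first} reduce $z^{n-3/2}F(-1/2,3/2,n-1/2,z)$ to an elementary closed form --- you explicitly expect ``a constant multiple of a single power'' --- and \emph{then} differentiate $n-1$ times by the power rule. That first step cannot be carried out: the bracketed function is not a pure power, and for general $n$ it is not even algebraic. Already for $n=2$ it equals $[z(1-z)]^{1/2}$ (a product of two powers, so Leibniz terms appear), and for $n=3$ one can check that
\begin{equation*}
z^{3/2}F(-1/2,3/2,5/2,z)=\tfrac32\left[\tfrac14\arcsin\sqrt{z}-\tfrac{1}{16}\sin\bigl(4\arcsin\sqrt{z}\bigr)\right],
\end{equation*}
which contains a genuinely transcendental $\arcsin$ term. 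The ``miracle'' is that these transcendental pieces are annihilated precisely by the $n-1$ differentiations, so no reduction to an elementary expression exists \emph{before} the derivatives are taken. (There is also a small sign slip in your plan: for $g=g(\cosh t)$ one has $\frac{1}{\sinh t}\partial_t g=+g'(\cosh t)$, not $-g'$; the factor $(-1)^{n-1}$ in $c_n^1$ comes from the chain rule through $z$, which carries $-\sinh t$.)

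The missing idea, and the one the paper uses, is the parameter-lowering differentiation formula (Magnus et al., p.~41)
\begin{equation*}
\frac{d^m}{dy^m}\bigl[y^{c-1}F(a,b,c;y)\bigr]=(c-m)_m\, y^{c-m-1}F(a,b,c-m;y),
\end{equation*}
which is exactly tailored to the lemma because the exponent $n-3/2$ equals $c-1$ with $c=n-1/2$ the third parameter. Differentiating \emph{first} with $m=n-1$ collapses everything to $(1/2)_{n-1}z^{-1/2}F(-1/2,3/2,1/2,z)$, and only this low-parameter hypergeometric is elementary: the quadratic transformation $F(a,1-a,c,y)=(1-y)^{c-1}F((c-a)/2,(c+a-1)/2,c,4y-4y^2)$ together with $F(a,b,b,y)=(1-y)^{-a}$ gives $F(-1/2,3/2,1/2,z)=(1-z)^{-1/2}(1-2z)$. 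Since $z(1-z)=(\cosh^2T-\cosh^2t)/4\cosh^2T$ and $1-2z=\cosh t/\cosh T$, the stated formula follows, with $(1/2)_{n-1}=\Gamma(n-1/2)/\sqrt{\pi}$ producing the constant $c_n^1$. So your proposal as written would stall at its first step; repairing it means importing precisely this derivative identity and inverting the order of your two main steps.
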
 
\begin{proof}
Set $z= (\cosh T-\cosh t)/2\cosh T$, ${\partial\over\sinh 
t\partial t}=\frac{-1}{2\cosh T}\frac{\partial }{\partial z}$,\\
$K(T, t)=(-1)^{n-1}(2\cosh T)^{-1/2}\times$\begin{align}\left(\frac{\partial}{\partial z}\right)^{n-1}[z^{n-3/2}F(-1/2, 3/2, n-1/2, z)]. \end{align}
Using the
formula in (Magnus et al. \cite{Magnus et al.} p. 41) 
\begin{equation} \label{derivative}{d^m\over dy^m}y^{c-1}F(a, b, c; y)=(c-m)_m y^{c-m-1}F(a, b, c-m, y)\end{equation} we can write 
$K(T, t)=(1/2)_{n-1}(-1)^{n-1}(2\cosh T)^{-1/2}\times$\begin{align} z^{-1/2}F(-1/2, 3/2, 1/2, z). \end{align}
Using the following relation (Magnus et al. \cite{Magnus et al.} p. 50) $$F(a,1-a, c, y)=(1-y)^{c-1} F((c-
a)/2, (c+a-1)/2, c, 4y-4y^2),$$
$K(T, t)=(1/2)_{n-1}(-1)^{n-1}(2\cosh T)^{-1/2}\times$\begin{align} z^{-1/2}(1-z)^{-1/2}F(1/2, -1/2, 1/2, 4z-4z^2). \end{align}
and the formula \cite{Magnus et al.} p. 38) $F(a, b, b,  z)=(1-z)^{-a}$,\\
$K(T, t)=(1/2)_{n-1}(-1)^{n-1}\sqrt{2}(\cosh T)^{-1/2} \cosh t  (\cosh^2 T-\cosh^2 t)^{-1/2}$\\
$K(T, t)=c_n^1(\cosh T)^{-1/2} \cosh t  (\cosh^2 T-\cosh^2 t)^{-1/2}$ as stated,
and the proof of Lemma \ref{lem2} is finished.
\end{proof}
\begin{lem}\label{lem3}
Set $I(T, z')=c_n\cosh^{1/2}T\int^T_0(\cosh T-\cosh t)^{n-3/2}\times $ \begin{equation}\label{I} F(-1/2, 3/2, n-1/2, {\cosh T-\cosh t\over 2\cosh T})  \sinh t u (t, z') d t, \end{equation}
where $u(t, z')$ is the solution of the Cauchy problem for the wave equation on the complex hyperbolic space,  
 $c_n=(-1)^{n-1}\pi^{n-1/2}2^{n+1/2}/ \Gamma (n-1/2)$, $F(a, b, c, t)$ is the classical hypergeometric function ${}_2F_1$ given in \eqref{Gauss},\\
 then we have 
\begin{align}I(T, z')=\int_{d(z', x)<T}f(x)d\mu(x).\end{align} 
\end{lem}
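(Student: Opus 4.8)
The plan is to reduce the radial integral defining $I(T,z')$ to the geodesic-ball integral of $f$ by transferring the $(n-1)$ derivatives encoded in the explicit wave solution onto the hypergeometric kernel, where Lemma \ref{lem2} makes them collapse. First I would pass to geodesic polar coordinates centred at $z'$. Since $\C H^n$ has real dimension $2n$ and the area of the geodesic sphere of radius $r$ is proportional to $\sinh^{2n-1}r\,\cosh r$, the target quantity becomes
$$\int_{d(z',x)<T}f(x)\,d\mu(x)=\omega_n\int_0^T \sinh^{2n-1}r\,\cosh r\,(M_rf)(z')\,dr,$$
where $(M_rf)(z')$ denotes the mean of $f$ over the geodesic sphere of radius $r$ about $z'$ and $\omega_n$ is the volume of the unit sphere in the tangent space. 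The point of introducing the spherical means is that the explicit solution of the Cauchy problem \eqref{Cauchy-Problem} on $\C H^n$ — our principal analytic tool — expresses $u(t,z')$ as an $(n-1)$-fold application of the radial operator $D:=\dfrac{1}{\sinh t}\dfrac{\partial}{\partial t}$ to a primitive $P(t,z')$ built from these means; schematically $u(t,z')=C_n\,D^{\,n-1}P(t,z')$.

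Next I would insert this representation into the definition \eqref{I} of $I(T,z')$, writing $\Phi(T,t):=(\cosh T-\cosh t)^{n-3/2}F\!\left(-\tfrac12,\tfrac32,n-\tfrac12,\tfrac{\cosh T-\cosh t}{2\cosh T}\right)$, so that
$$I(T,z')=c_nC_n\cosh^{1/2}T\int_0^T \Phi(T,t)\,\bigl(D^{\,n-1}P\bigr)(t,z')\,\sinh t\,dt.$$
Observe that with respect to the measure $\sinh t\,dt$ the operator $D$ is skew-adjoint up to boundary terms, because $\int \phi\,(D\psi)\,\sinh t\,dt=\int\phi\,\psi_t\,dt=-\int(D\phi)\,\psi\,\sinh t\,dt+\text{bdry}$. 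Integrating by parts $n-1$ times therefore moves all the derivatives onto the kernel and produces $(-1)^{n-1}D^{\,n-1}\Phi(T,t)$ inside the integral. By Lemma \ref{lem2} this equals $(-1)^{n-1}K(T,t)=(-1)^{n-1}c_n^1\cosh^{-1/2}T\,\cosh t\,(\cosh^2T-\cosh^2 t)^{-1/2}$, and the troublesome hypergeometric factor disappears. What remains is an integral of $P$ against the elementary kernel $\cosh t\,(\cosh^2T-\cosh^2 t)^{-1/2}$.

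The final step is to evaluate this reduced integral and match constants. The factor $\cosh^{1/2}T$ from \eqref{I} cancels the $\cosh^{-1/2}T$ in $K$, the sign $(-1)^{n-1}$ in $c_n^1$ cancels the one in $c_n$, and the substitution $v=\sinh t$ (so that $\cosh^2T-\cosh^2 t=\sinh^2T-v^2$) renders the surviving kernel $\cosh t\,(\cosh^2T-\cosh^2 t)^{-1/2}\,dt$ elementary; combined with the defining relation between $P$ and the spherical means $M_rf$, the integral reproduces exactly $\omega_n\int_0^T\sinh^{2n-1}r\,\cosh r\,(M_rf)(z')\,dr$, i.e. the ball integral. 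The main obstacle, and the step requiring the most care, is the integration by parts itself: one must verify that every boundary contribution vanishes. At $t=0$ this follows from $u(0,z')=0$ together with the regularity of $P$ and the vanishing of $\sinh t$; at $t=T$ it follows from the fact that $\Phi(T,t)$ and its first $n-2$ $D$-derivatives vanish to sufficiently high order in $(\cosh T-\cosh t)$, while the surviving singularity $(\cosh^2T-\cosh^2 t)^{-1/2}$ in $K$ is integrable. Keeping track of the precise constants $c_n$ and $c_n^1$ through these manipulations is the bookkeeping that finishes the proof.
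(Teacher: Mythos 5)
Your proposal follows essentially the same route as the paper's own proof: insert the explicit solution formula $u(t,z')=(2\pi)^{-n}\left(\frac{\partial}{\sinh t\,\partial t}\right)^{n-1}P(t,z')$ of Intissar--Ould Moustapha, integrate by parts $n-1$ times with respect to the measure $\sinh t\,dt$ so that Lemma \ref{lem2} collapses the hypergeometric kernel to $c_n^1\cosh^{-1/2}T\cosh t\,(\cosh^2T-\cosh^2t)^{-1/2}$, then exchange the order of integration and evaluate the resulting inner integral, exactly as the paper does with $j(T,r)=\frac{\pi}{2}$. The only point to tighten is your last paragraph: the surviving kernel-with-measure is $\sinh t\cosh t\,(\cosh^2T-\cosh^2t)^{-1/2}\,dt$ (you dropped the $\sinh t$ there, and without it the inner integral is elliptic rather than elementary), and the proof is completed by checking that the inner integral against the weight $(\cosh^2t-\cosh^2r)^{-1/2}$ coming from $P$ is the constant $\frac{\pi}{2}$, independent of $r$ and $T$, via the substitution $s=\cosh^2t-\cosh^2r$.
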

\begin{proof}
Recall that the Cauchy problem \eqref{Cauchy-Problem} has a unique solution given by (Intissar-Ould Moustapha \cite{Intissar-Moustapha}): 
\begin{equation}\label{Solution-Wave} u(t, z')=(2\pi)^{-n}\left({\partial\over\sinh t\partial t}\right)^{n-1}\int_{d(z', x)<t}\, {f(x)\, d\mu(x)\over 
\sqrt{\cosh^2t-\cosh^2d(z', x)}},\end{equation} where $d(z', x)$ is the geodesic distance between $z'$ and 
$x$ in $\C H^n$ and $d\mu(x)$ is the volume element on $\C H^n$.\\
Inserting the expression of $u(t, z')$ given by \eqref{Solution-Wave}in the integral \eqref{I} and 
integrating by parts $(n-1)$-times, we obtain \\ 
 \begin{align} I(T, z')=c_n(2\pi)^{-n}(\cosh T)^{1/2} \int_0^T 
K(T, t)\times \nonumber\\ \int_{d(z', x)<t}\frac{f(x)}{\sqrt{\cosh t-\cosh^2d(z', x)}}d\mu(x)\sinh t dt,\end{align}
 where $K(T, t)$ is as in \eqref{KT1}
Using the formula \eqref{KT2} and changing the order of integration we obtain\\
$ I(T,  z')= c_n c_n ^1 (2\pi)^{-n} \int_0^T (\cosh^2 T-\cosh^2 t)^{-1/2}\times$
 \begin{align} \int_{d(z', x)<t}\frac{f(x)}{\sqrt{\cosh^2t-\cosh^2d(z', x)}}d\mu(x)\sinh t\cosh t dt\end{align}
$$ I(T, z')= \frac{2}{\pi}\int_{d(z', x)<t} f(x) j(T, r) d\mu(x)$$
where\\
$$j(T, r)=\int_{r}^T(\cosh^2 T-\cosh^2 t)^{-1/2}(\cosh^2t-\cosh^2r)^{-1/2}\sinh t\cosh t dt,$$
putting, $s=\cosh^2t-\cosh^2r$ we obtain
$$j(T, r)=\int_{r}^T(\cosh^2 T-\cosh^2 t)^{-1/2}(\cosh^2t-\cosh^2r)^{-1/2}\sinh t\cosh t dt$$
and $$j(T, r)=\int_0^{\cosh^2 T-\cosh^2 t}(\cosh^2 T-\cosh^2 r-s^2)^{-1/2}s^{-1/2}\frac{ds}{2}$$
Making use of the substitution  $s=(\cosh^2 T-\cosh^2 t)z$, we have\\
$j(T, r)=\int_0^1(1-z)^{-1/2} z^{-1/2}\frac{dz}{2}=\frac{1}{2}\beta(1/2, 1/2)=\frac{\pi}{2}$,\\ and the proof of the Lemma is finished.
\end{proof}
\begin{lem} \label{lem4} 
 Set
$$H_n(\lambda, T)= \sinh ^{2n}T F((n-i\sqrt\lambda)/2, (n+i\sqrt\lambda)/2, n+1, -\sinh^2T)$$
then we have\\
 $ H_n(\lambda, T)=\int_0^T(\cosh T-\cosh t)^{n-1/2}\times$
\begin{align}\label{Int-Rep} F(-{1\over 2},{3\over 2}, n+{1\over 2}, {\cosh T-\cosh t)\over 2\cosh T}) 
\cos\sqrt{\lambda} t dt.\end{align}
\end{lem}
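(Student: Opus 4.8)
The plan is to prove \eqref{Int-Rep} by showing that both sides, regarded as functions of $T$ for fixed $\lambda$, satisfy one and the same second order linear ODE and have the same behaviour as $T\to 0$; uniqueness of the solution with the prescribed singular behaviour at the origin then forces equality. To produce the ODE from the left-hand side, set $x=-\sinh^2 T$, so that the hypergeometric factor $F((n-i\sqrt\lambda)/2,(n+i\sqrt\lambda)/2,n+1,x)$ has parameters with $a+b+1=n+1=c$ and $ab=(n^2+\lambda)/4$, hence solves $x(1-x)y''+(n+1)(1-x)y'-\tfrac{n^2+\lambda}{4}\,y=0$. Transporting this equation through $x=-\sinh^2 T$ and conjugating by the prefactor $\sinh^{2n}T$ turns it into a Sturm--Liouville equation $\mathcal D_T H_n=-\lambda H_n$, where $\mathcal D_T=\partial_T^2+p(T)\,\partial_T+q(T)$ has coefficients that are explicit rational expressions in $\cosh T$ and $\sinh T$ (the sign of $\mathcal D_T$ being fixed so that the eigenvalue is $-\lambda$). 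A Frobenius analysis at $T=0$, where the indicial exponents of $\mathcal D_T$ are $0$ and $2n$, shows that the solution space is spanned by one solution behaving like a constant and one behaving like $T^{2n}$, and $H_n(\lambda,T)=\sinh^{2n}T\,F(\dots)$ is precisely the branch vanishing like $T^{2n}$.

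Next I would show the right-hand side $R(T)$ of \eqref{Int-Rep} solves $\mathcal D_T R=-\lambda R$ as well, the mechanism being transmutation. The claim is that the kernel $K(T,t)=(\cosh T-\cosh t)^{n-1/2}F(-1/2,3/2,n+1/2,(\cosh T-\cosh t)/2\cosh T)$ satisfies $\mathcal D_T K(T,t)=\partial_t^2 K(T,t)$. Granting this, one applies $\mathcal D_T$ under the integral sign (collecting the contributions from differentiating the upper limit $t=T$), replaces $\mathcal D_T K$ by $\partial_t^2 K$, and integrates by parts twice in $t$ to transfer $\partial_t^2$ onto $\cos\sqrt\lambda t$; since $\partial_t^2\cos\sqrt\lambda t=-\lambda\cos\sqrt\lambda t$, this yields $\mathcal D_T R=-\lambda R$ up to boundary terms at $t=0$ and $t=T$. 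To verify the transmutation identity I would first simplify $K$ with the same quadratic transformation used in Lemma~\ref{lem2}, namely $F(-1/2,3/2,n+1/2,w)=(1-w)^{n-1/2}\,F(\tfrac{n+1}{2},\tfrac{n-1}{2},n+1/2,4w(1-w))$ with $w=(\cosh T-\cosh t)/2\cosh T$. Since $4w(1-w)=1-\cosh^2 t/\cosh^2 T$, this rewrites $K$ so that it depends on $(T,t)$ only through $\cosh^2 T-\cosh^2 t$ and $\cosh T$, a form on which the actions of $\mathcal D_T$ and of $\partial_t^2$ can be compared directly, using the differentiation formula \eqref{derivative}.

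Finally I would match the $T\to0$ asymptotics. Both sides vanish at $T=0$; for small $T$ one has $\cosh T-\cosh t\approx (T^2-t^2)/2$ and both hypergeometric factors tend to $1$, so $R(T)\sim 2^{-(n-1/2)}\,T^{2n}\int_0^1(1-s^2)^{n-1/2}\,ds$, which is a nonzero multiple of $T^{2n}$ carrying no constant term; hence $R$ lies on the same $T^{2n}$ branch as $H_n$, which rules out the constant solution. Comparing the leading coefficients, the Beta integral $\int_0^1(1-s^2)^{n-1/2}\,ds=\tfrac12\,\beta(1/2,n+1/2)$ against the $m=0$ term of the series for $H_n$, then pins down the normalising constant in the identity (this comparison shows that a constant factor of the form $2^{n+1/2}\Gamma(n+1)/\sqrt\pi\,\Gamma(n+1/2)$ is in fact needed, and I would insert it; e.g. at $n=1$ the kernel reduces by $F(-1/2,3/2,3/2,w)=(1-w)^{1/2}$ to $\frac{1}{\sqrt{2\cosh T}}\int_0^T\sqrt{\cosh^2 T-\cosh^2 t}\,\cos\sqrt\lambda t\,dt$, whose small-$T$ leading term is $\tfrac{\pi}{4\sqrt2}T^2$ rather than $T^2$). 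With the constant in place, the two solutions of $\mathcal D_T\,\cdot=-\lambda\,\cdot$ on the $T^{2n}$ branch coincide.

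I expect the main obstacle to be the transmutation identity $\mathcal D_T K=\partial_t^2 K$ together with the control of the boundary contributions produced by the two integrations by parts: the factor $(\cosh T-\cosh t)^{n-1/2}$ vanishes at $t=T$, which helps, but its $t$-derivatives are singular there for small $n$, so the endpoint terms at $t=T$ (and at $t=0$, where $\partial_t K$ should vanish by the even dependence on $t$) must be handled carefully, conveniently via the substitution $s=\cosh t$ already used in Lemma~\ref{lem3}. As an alternative route that explains the appearance of $\cos\sqrt\lambda t$ more directly, one may insert Euler's integral representation for the hypergeometric on the left: the complex conjugate exponents $(n\mp i\sqrt\lambda)/2$ produce, after a hyperbolic change of variable that realises $\cosh t$ as a cross-ratio, exactly the oscillatory factor $\cos\sqrt\lambda t$ (as the $a\leftrightarrow b$, i.e. $\sqrt\lambda\to-\sqrt\lambda$, symmetric combination), and the remaining algebraic factors reassemble the kernel; this is the generalized Mehler--Dirichlet representation of the relevant radial eigenfunction.
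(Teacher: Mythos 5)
Your starting observation is correct and important: Lemma \ref{lem4} as printed cannot hold as an identity, and your small-$T$ computation at $n=1$ is accurate. But your proposed repair --- inserting the pure constant $2^{n+1/2}\Gamma(n+1)/(\sqrt{\pi}\,\Gamma(n+1/2))$ --- is itself wrong, and the error invalidates the key step you deferred. The factor missing from \eqref{Int-Rep} is that constant \emph{times} $\cosh^{1/2}T$, i.e.\ it is $T$-dependent; matching leading terms as $T\to 0$ cannot detect it because $\cosh^{1/2}T=1+O(T^2)$. Concretely, take $n=1$ and $\sqrt{\lambda}=i$, so that $\cos(\sqrt{\lambda}\,t)=\cosh t$ and $F(1,0,2,-\sinh^2T)\equiv 1$, whence $H_1=\sinh^2T$ exactly; on the other side, using $F(-1/2,3/2,3/2,w)=(1-w)^{1/2}$ and the substitution $u=\sinh t$,
\[
\frac{1}{\sqrt{2\cosh T}}\int_0^T\sqrt{\cosh^2T-\cosh^2t}\,\cosh t\,dt=\frac{\pi}{4\sqrt{2}}\,\frac{\sinh^2T}{\cosh^{1/2}T},
\]
so the ratio of the two sides is $\tfrac{4\sqrt{2}}{\pi}\cosh^{1/2}T$, not a constant. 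This also shows that the transmutation identity $\mathcal{D}_TK=\partial_t^2K$, which you flagged as the main obstacle and left unproved, is in fact \emph{false} for your kernel $K$: if it held, your Frobenius-plus-uniqueness argument would force $H_n$ and the integral to be proportional by a constant, contradicting the computation above. The correct identity, with the weight built in, is
\[
H_n(\lambda,T)=\frac{2^{n+1/2}\,\Gamma(n+1)}{\sqrt{\pi}\,\Gamma(n+1/2)}\,\cosh^{1/2}T\int_0^T(\cosh T-\cosh t)^{n-1/2}\,F\Bigl(-\tfrac12,\tfrac32,n+\tfrac12,\tfrac{\cosh T-\cosh t}{2\cosh T}\Bigr)\cos(\sqrt{\lambda}\,t)\,dt,
\]
and it is $\tilde K=\cosh^{1/2}T\,K$, not $K$, that can satisfy a transmutation relation compatible with the Jacobi equation solved by $H_n$.

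For comparison, the paper's proof is precisely the ``alternative route'' you mention at the end: specialize Koornwinder's generalized Mehler formula \eqref{Mehler-Fucs} to the Jacobi parameters $(\alpha,\beta)=(n,-1)$, for which $\varphi^{(\alpha,\beta)}_{\sqrt{\lambda}}$ is exactly the hypergeometric factor in $H_n$, $\Delta(T)=\sinh^{2n+1}T\cosh^{-1}T$ and $c_\alpha=2^{n-1/2}/\Gamma(n+1/2)$. Writing $\sinh 2T=2\sinh T\cosh T$ and simplifying yields exactly the corrected display above, so the paper's own cited formula confirms both that \eqref{Int-Rep} is misnormalized (your observation) and that the correction is $T$-dependent (which your proposal misses); note that the weight $\cosh^{1/2}T$ does reappear in the paper's formula \eqref{III}, which is what is actually used downstream, so the defect sits in the statement of the lemma rather than in its later use. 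If you want to keep your ODE/transmutation route as a self-contained proof, carry it out for $\tilde K$ instead of $K$; otherwise the shortest complete argument is the one-line specialization of \eqref{Mehler-Fucs} just described, supplemented by the constant bookkeeping that the paper omits.
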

\begin{proof}
 This lemma is direct consequence of the generalized Mehler Fucs formula for the Jacobi function (Koornwinder\cite{Koornwinder}) \begin{equation}\label{Mehler-Fucs} 
[\Gamma (\alpha +1)]^{-1}\Delta (T)\varphi^{(\alpha,\beta)}_{\sqrt\lambda} (T) =\pi^{-1/2}\int_0^T\cos\sqrt\lambda t E(t,T) dt, \end{equation} where
 $\Delta(T)=\sinh^{2\alpha + 1}T\cosh^{2\beta + 1}$ and
\begin{equation} E(t, T)=c_{\alpha}\sinh 2T\cosh^{\beta -1/2}T(\cosh T-\cosh t)^{\alpha -1/2}\times $$ $$ 
F(1/2+\beta, 1/2-\beta, \alpha+1/2,  (\cosh T-\cosh t)/2\cosh T),\end{equation} and $c_{\alpha}= 
2^{\alpha-1/2}[\Gamma (\alpha+1/2)]^{-1}$ and
$$\varphi^{(\alpha, \beta)}_\lambda (x)={}_2F_1\left(\frac{\alpha+\beta+1-i\lambda}{2}, \frac{\alpha+\beta+1+i\lambda}{2}, \alpha+1, -\sinh^2 x\right).$$
\end{proof}
Proof of the Theorem \ref{thm1}
To prove  i)
$$N(T, z, z')=\sum_{\gamma\in\Gamma}\chi_{B(z, T)}(\gamma z')
$$
where $\chi_{B(z, T)}$ is the characteristic function of the ball $B(z, T)$ is the complex hyperbolic ball of center $z$ and radius $T$
$$
I(T, z, z', \alpha)=\int_F N(T, x, z')h(x)d\mu(x)=\int_F \sum_{\gamma\in\Gamma}\chi_{B(x, T)}(\gamma z')h(x)d\mu(x)
$$
$$
I(T, z, z', \alpha)=\sum_{\gamma\in\Gamma}\int_{d(x, \gamma z') < T} h(x)d\mu(x)
$$
$$
I(T, z, z', \alpha)=\sum_{\gamma\in\Gamma}\int_{d( \gamma x, z') < T} h(x)d\mu(x)
$$
and hence 
$$
I(T, z, z', \alpha)=\sum_{\gamma\in\Gamma}\int_{d( \gamma x, z') < T} h(x)d\mu(x)
$$
and this prove i).\\
The part ii) is a consequence of the i) and the Lemma \ref{lem1}.\\
Finally iii) is a consequence of the Lemma \ref{lem3} and the part i). The proof of Theorem \ref{thm1} is finished.\\
Proof of the Theorem \ref{thm2}\\
We note that if the initial data 
$f$ of the problem \eqref{Cauchy-Problem} is $\Gamma-$\ automorphic, then the solution $u(t, z')$ has the 
same property. This follows from the uniqueness of the solution and the invariance of 
Laplace-Beltrami operator under motions group $SU(n,1)$.\\ On the other hand the 
solution of the
Cauchy problem \eqref{Cauchy-Problem} for the initial data $f(x)=\sum_{\gamma\in \Gamma} h(\gamma^{-1}x)$, can be represented
for a sufficiently small $\alpha $\ in the form  $$u(t, z')=\int_{-n^2}^{+\infty} {\sin 
\sqrt{\lambda t}\over \sqrt{\lambda}}d_\lambda 
\int_{F}h(x)\theta_\Gamma(x, z', \lambda) d\mu (x).$$
 Substituting this in 
\eqref{I}, we obtain after integrating by parts and changing the order of integration:\\
$I(T, z, z', \alpha)=c_n\cosh ^{1/2}T\int_{-n^2}^{+\infty}\int_0^T(\cosh T-\cosh t)^{n-1/2}\times$ \begin{align}\label{III} F(-{1\over 
2},{3\over 2}, n+{1\over 2}, {\cosh T-\cosh t)\over 2\cosh T}) 
\cos\sqrt{\lambda} t\,  dt\, \nonumber\\  d_\lambda \int_F h(x)\theta_\Gamma(x, z', \lambda) d\mu(x),\end{align}
 and the proof of the theorem \ref{thm2} is finished.
Note that  iii) is analogous of the formulas in  the Lemma 2.5  of ( Lax-Phillips \cite{Lax-Phillips} p.318). 
\section{The asymptotic behavior of the number $N(T, z, z')$}
It is clear, 
from the inequalities ii) of the theorem \ref{thm2}, which  equivalent to
\begin{align}I(T-\alpha, z, z')\leq N(T, z, z', \alpha)\leq 
I(T+\alpha, z, z', \alpha),\end{align}
that is in order to study the asymptotic behavior of the number 
$N(T, z, z')$ it suffices to study that of $I(T, z, z', \alpha)$.\\
For this set
\begin{align}\label{IT} I(T, z, z', \alpha)=I_1 + I_2\end{align}
 where\\
$  I_1={\pi^n\over \Gamma (n+1)}\sinh ^{2n}T\times$\\ 
\begin{align} \sum_{j=1}^N F({n+\mu_j\over 
2}, {n+\mu_j\over 2}, n+1, -\sinh^2T)\varphi_j(z')\int_{F} 
\varphi_j(x)h(x)d\mu(x) \end{align}  with $\mu_j=\sqrt{|\lambda|}$\ and\\
$ I_2={\pi^n\over\Gamma (n+1)}\sinh^{2n}T\int_0^{+\infty}F((n-
i\sqrt\lambda)/2, (n+i\sqrt\lambda)/2, n+1, -\sinh^2T)\times$
\begin{align} d_\lambda\int_{F}\theta_\Gamma(x, z', \lambda)h(x)d\mu(x).\end{align} To 
estimate $I_1$ we use the formula relating hypergeometric functions of arguments $z$ and $1/z$ (Magnus et al.\cite{Magnus et al.})p.48
\begin{align} F(a, b, c, z)=\frac{\Gamma(c)\Gamma(b-a)}{\Gamma(b)\Gamma(c-a)}(-z)^{-a}F(a, a-c+1, a-b+1,1/z)+\nonumber\\ \frac{\Gamma(c)\Gamma(a-b)}{\Gamma(a)\Gamma(c-b)}
(-z)^{-b}F(b, b-c+1, b-a+1,1/z)\end{align}
and we obtain for $T\rightarrow +\infty$\ : \begin{align} F({n+\mu_j\over 2}, {n-\mu_j\over 
2}, n+1, -\sinh^2T)=c_j(n) e^{(\mu_j-n)T}[1+O(e^{-2T})]\end{align} \begin{align}c_j(n)={\Gamma 
(n+1)\Gamma (\mu_j)2^{-\mu_j+n}\over \Gamma((n+\mu_j)/2)\Gamma(1+(n+\mu_j)/2)}\end{align} 
replacing in $(2.2)$ we get:
\begin{align}\label{I1}I_1= ({\pi\over 2})^n\sum_{j=1}^N{\Gamma(\mu_j)2^{-\mu_j}\over \Gamma 
((n+\mu_j)/2)\Gamma(1+(n+\mu_j)/2)} e^{(n+\mu_j)T}\varphi_j(z')\nonumber\\ \int_{F} 
\varphi_j(x)h(x)d\mu(x)+O(\alpha e^{(2n-2)T})\end{align}
In order to 
estimate $I_2$ we need the following lemmas\\  
\begin{lem} \label{lem5} 
 Let $H_n(\lambda, T)$ be as  in Lemma \ref{lem4}
then we have\\
for $\lambda \rightarrow \infty$ and $T\rightarrow \infty $, the following estimates hold \\
\begin{align}H_n(\lambda, T)=O(\lambda ^{-(2n+1)/4}e^{n T}) 
\end{align}
\end{lem}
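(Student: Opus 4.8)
The goal is an asymptotic upper bound for
$$H_n(\lambda, T)= \sinh^{2n}T\, F\bigl((n-i\sqrt\lambda)/2,\,(n+i\sqrt\lambda)/2,\,n+1,\,-\sinh^2 T\bigr)$$
as both $\lambda\to\infty$ and $T\to\infty$. The most natural route is to exploit the integral representation already established in Lemma \ref{lem4}, namely
$$H_n(\lambda, T)=\int_0^T(\cosh T-\cosh t)^{n-1/2}\,F\!\left(-\tfrac12,\tfrac32,n+\tfrac12,\tfrac{\cosh T-\cosh t}{2\cosh T}\right)\cos\sqrt\lambda\,t\,\,dt.$$
This is exactly the kind of oscillatory integral whose decay in the frequency $\sqrt\lambda$ is governed by the smoothness and endpoint behavior of the amplitude. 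So the plan is to treat $H_n(\lambda, T)$ as a cosine transform of the function $g(t):=(\cosh T-\cosh t)^{n-1/2}F(-1/2,3/2,n+1/2,(\cosh T-\cosh t)/2\cosh T)$ supported on $[0,T]$, and to extract the $\lambda^{-(2n+1)/4}$ decay from its singularity structure.

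First I would identify where the amplitude $g(t)$ fails to be smooth on the closed interval $[0,T]$. At $t=0$ the factor $\cosh T-\cosh t$ vanishes like $t^2$, giving $g(t)\sim c\,t^{2n-1}$, which is smooth enough not to dominate. The decisive feature is the endpoint $t=T$, where $\cosh T-\cosh t$ vanishes linearly and the leading exponent $n-1/2$ produces a branch-type singularity $(T-t)^{n-1/2}$. The standard principle for oscillatory integrals (integration by parts until the boundary term that survives is the one coming from the weakest endpoint regularity, then a stationary-phase/Erdélyi-type estimate for an algebraic endpoint singularity) says that a boundary singularity of the form $(T-t)^{n-1/2}$ contributes a term of order $\lambda^{-(n-1/2+1)/2}=\lambda^{-(2n+1)/4}$ to the cosine transform. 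That is precisely the claimed exponent, which is the main sanity check that this approach is the intended one.

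Concretely, I would integrate by parts in $t$ repeatedly against $\cos\sqrt\lambda\,t$, each step producing a factor $\lambda^{-1/2}$ and differentiating the amplitude $g$. The interior is harmless since $g$ is smooth on any compact subinterval of $(0,T)$; the contributions to track are the boundary terms at $t=0$ and $t=T$. The $t=0$ terms, controlled by $g(t)\sim t^{2n-1}$, decay faster and are absorbed into the error. The governing contribution is the half-integer singularity at $t=T$: after carrying the integration by parts to the point where the remaining endpoint integrand is integrable, one applies the classical estimate for $\int_0^T (T-t)^{s}\cos\sqrt\lambda\,t\,\,dt = O(\lambda^{-(s+1)/2})$ with $s=n-1/2$, together with a bound $\cosh^{1/2}T$-type prefactor that is subsumed in the stated $e^{nT}$ growth (the factor $(\cosh T)^{\cdots}$ coming from $\sinh^{2n}T$ and the amplitude normalization behaves like $e^{nT}$ as $T\to\infty$).

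The step I expect to be the main obstacle is making the dependence on $T$ and the dependence on $\lambda$ uniform simultaneously, rather than one at a time. The amplitude $g(t)$ and all its $t$-derivatives carry $T$-dependent prefactors (through $\cosh T$ in the denominator of the hypergeometric argument and through the powers of $\cosh T-\cosh t$), so one must verify that the constants in the integration-by-parts bounds grow no faster than $e^{nT}$ and do not secretly reintroduce powers of $\lambda$. I would handle this by factoring out the $T$-growth explicitly (writing $g(t)=e^{nT}\tilde g(t)$ with $\tilde g$ and its derivatives bounded uniformly in $T$ near the endpoint $t=T$ after the natural rescaling) before applying the frequency-decay estimate, so that the two asymptotic regimes decouple cleanly and the product $\lambda^{-(2n+1)/4}e^{nT}$ emerges.
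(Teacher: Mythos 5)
Your proposal is correct in substance and follows the same basic route as the paper: both arguments start from the integral representation of Lemma \ref{lem4} and extract the factor $\lambda^{-(2n+1)/4}$ from the oscillatory decay of a cosine transform whose amplitude has a half-integer singularity $(T-t)^{n-1/2}$ at the endpoint $t=T$. The difference is in how that oscillatory estimate is obtained. The paper observes that the argument $(\cosh T-\cosh t)/2\cosh T$ lies in $[0,1/2]$, so the hypergeometric factor is bounded, and then simply quotes Levitan's formula (2.14) of \cite{Levitan}, namely $\int_0^T(\cosh T-\cosh t)^{(m-1)/2}\cos t\sqrt\lambda\,dt=O(\lambda^{-(m+1)/4}e^{cT})$ with $m=2n$; you instead propose to re-derive this estimate from scratch by repeated integration by parts together with the classical endpoint bound $\int_0^T(T-t)^{s}\cos\sqrt\lambda\,t\,dt=O(\lambda^{-(s+1)/2})$ with $s=n-1/2$, keeping the hypergeometric factor inside the amplitude throughout. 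Your version is longer but arguably more robust: strictly speaking, one cannot majorize an oscillatory integral by bounding one factor of the amplitude pointwise while keeping the oscillation in the majorant (this is exactly what the paper's ``hence $H_n(\lambda,T)=O(\int_0^T(\cosh T-\cosh t)^{(2n-1)/2}\cos t\sqrt\lambda\,dt)$'' step does), so either one argues with the full amplitude as you do, or one expands the hypergeometric series, which converges uniformly for argument in $[0,1/2]$, and applies Levitan's formula termwise. Your concern about tracking the $T$-dependence and the $\lambda$-dependence simultaneously is precisely the point where care is needed, and your factorization $g(t)=e^{nT}\tilde g(t)$ handles it (in fact the natural growth is only $e^{(n-1/2)T}$, comfortably inside the stated $e^{nT}$).

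One slip should be corrected: you claim that at $t=0$ the factor $\cosh T-\cosh t$ vanishes like $t^{2}$, so that $g(t)\sim c\,t^{2n-1}$ there. This is false: at $t=0$ that factor equals $\cosh T-1\neq 0$, and the amplitude is smooth and nonvanishing at $t=0$; the only zero of $\cosh T-\cosh t$ on $[0,T]$ is at $t=T$. The endpoint $t=0$ is nevertheless harmless, but for a different reason: the amplitude depends on $t$ only through $\cosh t$, hence is an even function of $t$, so all its odd-order derivatives vanish at $t=0$ and integration by parts against $\cos\sqrt\lambda\,t$ (whose odd-order antiderivatives are multiples of $\sin\sqrt\lambda\,t$, vanishing at $0$) produces no boundary contribution there at any order. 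With that correction, your plan goes through and yields the lemma.
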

\begin{proof}
We remark that  for $0\leq\frac{\cosh T-\cosh t}{2\cosh T}\leq \frac{1}{2}$  and hence the hypergeometric in the R.H.S. of  \eqref{Int-Rep} is bounded,
 \begin{align}H_n(\lambda, T)=O(\int_0^{T}(\cosh T - \cosh t)^{(2n-1)/2}\cos t\sqrt \lambda d t) \end{align}
using  the formula (2.14) in (Levitan \cite{Levitan}, p. 27)
$$ \int_0^{T}(\cosh T - \cosh t)^{(m-1)/2}\cos t\sqrt \lambda d t =O(\lambda^{-(m+1)/4}e^{(m-1)/4 T})$$
we have the result of the Lemma.
\end{proof}
Proof of Theorem \ref{thm3}
From \eqref{III} and  Lemma \ref{lem4}, we 
have:$$I_2=O(e^{nT}\int_{F}h(x)\int_0^1|d_\lambda\theta_\Gamma(x, z',
\lambda)|d\mu(x))+$$ $$\hspace{\fill}+O(e^{nT}\int_1^{+\infty}\lambda ^{-{2n+1\over 
4}}|d_\lambda\int_{F} \theta_\Gamma(x, z', \lambda)h(x)d\mu(x)|)$$ 
$$I_2=O(e^{nT})+O(e^{nT}\int_1^{+\infty}\lambda ^{-{2n+1\over 
4}}|d_\lambda\int_{F} \theta_\Gamma(x,z',\lambda)h(x)d\mu(x)|)$$ set :
$$J=\int_1^{+\infty}\lambda ^{-{2n+1\over 4}}|d_\lambda\int_{F} 
\theta_\Gamma(x, z', \lambda)h(x)d\mu(x)| $$ let $\delta > 0$, set :
\begin{equation} J=J_1+J_2\end{equation}
where
$$J_1=\int_1^{e^{\delta T}}\lambda ^{-{2n+1\over 4}}|d_\lambda\int_{F} 
\theta_\Gamma(x,z',\lambda)h(x)d\mu(x)|$$
and
$$ J_2=\int_{e^{\delta T}}^{+\infty}\lambda ^{-{2n+1\over 4}}|d_\lambda\int_{F} 
\theta_\Gamma(x, z', \lambda)h(x)d\mu(x)|$$
Hence we have
\begin{align}\label{I2} I_2=O(e^{nT})+O(e^{nT}J_1)+O(e^{nT}J_2).\end{align}
 Next we use the inequality 
as in \cite{Levitan} page $29$: \begin{equation} |\Delta\theta(x, z', \lambda)|\leq {1\over 
2}[\Delta\theta(x, x, \lambda)+\Delta\theta(z', z', \lambda) ] \end{equation}  to obtain:
$$J_1\leq {1\over 2}\int_1^{e^{\delta T}}\lambda ^{-{2n+1\over 4}} 
d\theta(x, x, \lambda)+{1\over 2}\int_1^{e^{\delta T}}\lambda ^{-{2n+1\over 4}} 
d\theta(z', z', \lambda) $$ and by the estimate \cite{Hormander}:
\begin{equation}\label{Estimate} \theta(x, x, \lambda)= O(\lambda^n) \end{equation} we have:
\begin{equation} \label{J1}J_1=O(e^{({2n-1\over 4})\delta T}).\end{equation} For the estimation of $J_2$, we recall 
the expansion of the spectral function of an elliptic operator in terms of its eigenfunctions: 
\begin{equation} \label{Theta} \theta_\Gamma (x, z', \lambda)=\int_{-n^2}^\lambda \sum_{j=1}^{N(\nu)}\varphi_j(\nu , 
z') \overline{\varphi_j(\nu, x)}d\rho (\nu) \end{equation} where $1\leq N(\nu)\leq \infty 
$,$\varphi_j(x,\nu)$\ is a $\Gamma-$\ automorphic solution of the equation $L\varphi = 
\nu\varphi_j$\ and $d\rho(\nu)$\ is a non decreasing function (see Berezanskii \cite{Ju} and Gel'fand- Kostyuchenko \cite{Gel'fand- Kostyuchenko}).\\
Recall that in this case, Parseval's equation is given by: 
\begin{equation}\label{Parsevall} \int_{F}|f(x)|^2d\mu(x)=\int_{-
n^2}^{+\infty}\sum_{j=1}^{N(\lambda)}|\int_{F}f(z')\overline{\varphi_j(z',\nu)}d\mu(z')|^2d \rho(\lambda) \end{equation}
From \eqref{Theta} we have:
$$J_2 \leq \int_{e^{\delta T}}^{+\infty}\lambda ^{-{2n+1\over 
4}}\sum_{j=1}^{N(\lambda)}|\varphi_j(z',\lambda)||\int_{F} 
h(x)\overline{\varphi_j(x,\lambda)}d\mu(x)|d\rho(\lambda).$$ Using the Cauchy-Shwartz 
inequality twice, we obtain: $$J_2\leq \int_{e^{\delta T}}^{+\infty}\lambda^{-{2n+1\over 
4}}(\sum_{j=1}^{N(\lambda)}|\varphi_j(z',\lambda)|^2)^{1/2}(\sum_{j=1}^{N(\lambda)}|\int_
{F} h(x)\overline{\varphi_j(x,\lambda)}d\mu(x)|^2)^{1/2}d\rho(\lambda)$$ 
$$\hspace{\fill}\leq(\int_{e^{\delta T}}^{+\infty}\lambda ^{-{2n+1\over 
2}}\sum_{j=1}^{N(\lambda)}|\varphi_j(z',\lambda)|^2 d\rho(\lambda))^{1/2}(\int_{e^{\delta 
T}}^{+\infty}\sum_{j=1}^{N(\lambda)}|\int_{F} 
h(x)\overline{\varphi_j(x,\lambda)}d\mu(x)|^2d\rho(\lambda))^{1/2}$$ and by using  \eqref{Theta} and  \eqref{Parsevall}
we obtain: $$J_2\leq (\int_{e^{\delta 
T}}^{+\infty}\lambda^{-{2n+1\over 
2}}d_\lambda\theta(z', z', \lambda))^{1/2}(\int_{F}h^2(x)d\mu(x))^{1/2}.$$ 

Using the estimates \eqref{Estimate} and the formula $\int  h^2(x)d\mu(x) = O(\alpha^{-2 n})$, we have:
 \begin{equation}\label{J2}J_2=O(e^{-\delta T/4}\alpha^{-n})\end{equation}
 It follows from \eqref{IT}, \eqref{I1},\eqref{I2}, \eqref{J1}  and \eqref{J2} that: $$I(T \pm 
\alpha, z_0, z, \alpha)=A(T, z_0, z)+O(\alpha e^{2(n-1)T})+O(e^{[n+({2n-1\over 2})\delta] 
T})+O(e^{({n-\delta\over 4})T}\alpha^{-n})$$ Putting $\alpha=e^{-\epsilon} ,\epsilon > 0$\ 
the best ratio between $\epsilon $\ and $\delta $\ is obtained by satisfying the equation:
$$n + {2n-1\over 4}\delta = (n-\delta/4) + n\epsilon =2n - 1 - \epsilon $$ from which it 
follows that for $n\geq 2$\ 
 $$\delta=2\epsilon ,\qquad \epsilon={2(n-1)\over 2n+1} 
,\qquad 2n-1-\epsilon = 2n-1- {2(n-1)\over 2n+1} $$ and for $n > 2$
$$\delta=2\epsilon, \qquad \epsilon={2(n-2)\over 2n+1}, \qquad 2n-1-\epsilon = 2n-2- 
{2(n-2)\over 2n+1} $$ This completes the proof of Theorem \ref{thm3}.
\section{Appendix}
Let $z$ be a fixed point in the complex hyperbolic space $\C H^n$, and let $h_1$ be a smooth
function whose support is concentrated in the complex hyperbolc  ball of radius 1
and centre  $z$, and suppose that
\begin{align} \int h_1(x)d\mu(x) = 1
\end{align} 
 Using the geodesic polar coordinates on the the complex hyperbolic space $\C H^n$,  $x=\tanh r \omega $, $r\geq 0$  and $\omega\in S^{2n-1}$, for $\alpha$  a small positive number, put
\begin{align} h(x) = c_n(\alpha) h_1(\frac{r}{\alpha}, \theta), r=d(x, z),\end{align}
where the constant $c_n(\alpha)$ is chosen so that
\begin{align} \int h(x)d\mu(x) = 1.
\end{align} 
 We show that the constant $c_n(\alpha)$ satisfies the condition
$$\lim _{\alpha\rightarrow 0}\alpha^{2n} c_n(\alpha) = 1.$$
\begin{align} 1=c_m(\alpha)\int_{S^{2n-1}}d\theta\int_0^\alpha h_1(\frac{r}{\alpha}, \theta)\sinh^{2n-1}r\cosh r d r
\end{align}
\begin{align} 1=c_m(\alpha)\alpha \int_{S^{2n-1}}d\theta\int_0^1 h_1(t, \theta)\sinh^{2n-1}\alpha t \cosh \alpha t d t
\end{align}
\begin{align} 1=\alpha^{2n}c_m(\alpha)\int_{S^{2n-1}}d\theta\int_0^1 h_1(t, \theta)\left(\frac{\sinh\alpha t}{\alpha \sinh  t}\right)^{2n-1}\sinh^{2n-1} t\cosh t d t
\end{align}
\begin{align} 1=\alpha^{2n}c_m(\alpha)[\int_{S^{2n-1}}d\omega\int_0^1 h_1(t, \omega)\sinh^{2n-1} t\cosh t d t+o(1)]
\end{align}
\begin{align}1=\alpha^{2n}c_n(\alpha)[1+o(1)].\end{align}

 Department of Mathematics, College of Arts and Sciences\\ Gurayat, Jouf University-Kingdom of Saudi Arabia.\\
 Facult\'e des Sciences et Techniques, Universit\'e de \\  Nouakchott Al-Aasriya, Nouakchott-Mauritanie \\ mohamedvall.ouldmoustapha230@gmail.com
\end{document}